\theoremstyle{definition}
\newtheorem{lem}{Lemma}[section]
\newtheorem{prop}[lem]{Proposition}
\newtheorem{thm}[lem]{Theorem}
\newtheorem*{Def}{Definition}
\newtheorem{alg}{Algorithm}
\begin{document}

\title{The Lights Out Game on Directed Graphs}

\author{T. Elise Dettling\thanks{Department of Mathematics, Grand Valley State University, 
Allendale, Michigan
49401-6495, dettlint@mail.gvsu.edu,}
\and Darren B. Parker\thanks{Department of Mathematics, Grand Valley State University, 
Allendale, Michigan
49401-6495, parkerda@gvsu.edu,
\url{http://faculty.gvsu.edu/parkerda}
}}

\date{
%\today 
%\\
\small MR Subject Classifications: 05C20, 05C40, 05C50, 05C57, 05C78\\
\small Keywords: Lights out, light-switching game, feedback arc sets, linear algebra.}

\maketitle

\begin{abstract}
We study a version of the lights out game played on directed graphs.  For a digraph $D$, we begin with a labeling of $V(D)$ with elements of $\mathbb{Z}_k$ for $k \ge 2$.  When a vertex $v$ is toggled, the labels of $v$ and any vertex that $v$ dominates are increased by 1 mod $k$.  The game is won when each vertex has label 0.  We say that $D$ is $k$-Always Winnable (also written $k$-AW) if the game can be won for every initial labeling with elements of $\mathbb{Z}_k$.  We prove that all acyclic digraphs are $k$-AW for all $k$, and we reduce the problem of determining whether a graph is $k$-AW to the case of strongly connected digraphs.  We then determine winnability for tournaments with a minimum feedback arc set that arc-induces a directed path or directed star digraph.
\end{abstract}

\section{Introduction}

The lights out game was originally an electronic game created by Tiger Electronics in 1995.  The idea behind the game has since been extended to several light-switching games on graphs.  Some of these extensions are direct generalizations of the original game, like the $\sigma^+$-game in \cite{Sutner:90} and the neighborhood lights out game developed independently in \cite{paper11} and \cite{Arangala:12}.  This was generalized further to a matrix-generated version in \cite{paper15}.  Other versions are explored in \cite{Pelletier:87}, \cite{Craft/Miller/Pritikin:09}, and \cite{paper14}.

In each version of the game, we begin with some labeling of the vertices, usually by elements of $\mathbb{Z}_k$ for some $k \ge 2$.  We play a given game by toggling the vertices, which changes the labels of some vertices according to whether or not they are adjacent to the toggled vertex.  The game is won when we achieve some desired labeling, usually where each vertex has label 0.

In this paper, we study a directed graph version of the neighborhood lights out game.  The game begins with a digraph $D$ and an initial \emph{labeling} of the vertices with elements of $\mathbb{Z}_k$, which we primarily express as a function $\lambda: V(D) \rightarrow \mathbb{Z}_k$.  The game is played by toggling vertices.  Each time a vertex $v$ is toggled, the label of $v$ and each vertex it dominates (i.e. each $w$ such that $vw \in A(D)$) is increased by 1 modulo $k$.  The object of the game is to toggle the vertices so that all vertices have label 0.  We call this game the $k$-lights out game (note that in \cite{paper15}, we call this the $(N,k)$-lights out game, where $N$ is the neighborhood matrix of the graph; we drop the $N$ in our notation here, since we only play one version of the game in this paper).  If it is possible to win the game when we begin with the labeling $\lambda$, we call $\lambda$ a $k$-winnable labeling.  We say that $D$ is $k$-Always Winnable (or $k$-AW) if every labeling of $V(D)$ is $k$-winnable.

The basic question we explore is for which digraphs $D$ and natural numbers $k$ is $D$ $k$-AW.  In Section~\ref{acyclic}, we solve this problem for acyclic digraphs and reduce the problem of determining $k$-winnability on digraphs to determining $k$-winnability on strongly connected digraphs.  We then turn our attention to strongly connected tournaments.  After introducing some definitions and elementary facts about feedback arc sets in Section~\ref{feedback}, we determine in Section~\ref{strong} precisely when a strongly connected tournament is $k$-AW in the cases where a minimum feedback arc set arc-induces either a directed path or a directed star.

For the most part, we use notation and terminology as in \cite{Bang-Jensen/Gutin}.  A digraph $D$ consists of a set of \emph{vertices} $V(D)$ along with a set $A(D)$ of ordered pairs of vertices called \emph{arcs}.  For $v,w \in V(D)$, we can express the arc $(v,w)$ as $vw$ or $v \rightarrow w$.  In this arc, we call $v$ the \emph{tail} and $w$ the \emph{head} of the arc and say that $v$ dominates $w$. If $B \subseteq A(D)$, we define the subgraph \emph{arc-induced} by $B$ (denoted $D_B$) to have arc set $B$ and vertex set consisting of all vertices incident with an arc in $B$.

\section{Linear Algebra}

There are two ways that linear algebra inserts itself into the study of the neighborhood lights out game.  One way is analogous to the neighborhood game on graphs as studied in \cite{Anderson/Feil:98}, \cite{Arangala/MacDonald/Wilson:14}, \cite{paper11}, \cite{paper15}, and \cite{Hope:10}.  For a digraph $D$, suppose we order the vertices $v_1, v_2, \ldots , v_n$.  We can express an initial labeling $\lambda$ of $V(D)$ as a column vector $\textbf{b}$, where $\textbf{b}[j] = \lambda(v_j)$.  If each vertex $v_i$ is toggled $x_i$ times, we can also express this as a column vector $\textbf{x}$, where $\textbf{x}[i] = x_i$.  Since the label of each $v_j$ is increased by $x_i$ mod $k$ for each $v_i$ that dominates $v_j$ and is unchanged by each $v_i$ that does not dominate $v_j$, the resulting label for each $v_j$ is $\lambda(v_j)+\sum_{i=1}^n a_{ij}x_i$, where $a_{ij}=1$ if either $i=j$ or $v_iv_j \in A(D)$, and $a_{ij}=0$ otherwise.  We then determine how to win the game by setting each label equal to 0, which results in a system of linear equations.  The toggling $\textbf{x}$ wins the game precisely when $\sum_{i=1}^n a_{ij}x_i = -\lambda(v_j)$.  If we let $N=[a_{ij}]$, winning the $k$-lights out game is equivalent to finding a solution to the matrix equation $N\textbf{x} = -\textbf{b}$.  Note that if $A$ is the adjacency matrix of $D$, then $N=A+I_n$, which we call the \emph{neighborhood matrix} of $D$.  We get the following.

\begin{prop}
Let $D$ be a digraph, let $N$ be the neighborhood matrix of $D$, and let $\textbf{b}$ be an initial labeling of $V(D)$.
\begin{enumerate}
\item $\textbf{b}$ is $k$-winnable if and only if there is a solution to the matrix equation $N\textbf{x} = -\textbf{b}$.
\item $D$ is $k$-AW if and only if $N$ is invertible over $\mathbb{Z}_k$.
\end{enumerate}
\end{prop}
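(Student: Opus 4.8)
The plan is to prove both parts by directly translating the combinatorial description of the game into the language of the matrix equation $N\mathbf{x} = -\mathbf{b}$, exploiting the setup already constructed in the paragraph preceding the statement. Essentially all of the work has been done in that derivation, so the proof is a matter of carefully reading off the equivalences.

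For part (1), I would recall that a sequence of toggles is recorded by a vector $\mathbf{x}$ over $\mathbb{Z}_k$ whose $i$th entry counts how many times $v_i$ is toggled, and that the resulting label on $v_j$ is $\lambda(v_j) + \sum_{i=1}^n a_{ij} x_i$, where $N = [a_{ij}]$. The game is won exactly when every resulting label is $0$, i.e. when $\sum_{i=1}^n a_{ij} x_i = -\lambda(v_j)$ for all $j$, which is precisely the statement that $N\mathbf{x} = -\mathbf{b}$. One point worth making explicit is that order does not matter: since each toggle adds a fixed increment mod $k$, the net effect depends only on the multiset of toggles, hence only on $\mathbf{x}$; and since we work mod $k$, toggling a vertex $k$ times is the identity, so there is no loss in restricting $\mathbf{x}$ to $\mathbb{Z}_k^n$. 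Thus $\mathbf{b}$ is $k$-winnable iff such an $\mathbf{x}$ exists iff $N\mathbf{x} = -\mathbf{b}$ has a solution over $\mathbb{Z}_k$.

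For part (2), I would invoke part (1): $D$ is $k$-AW iff $N\mathbf{x} = -\mathbf{b}$ has a solution for every $\mathbf{b} \in \mathbb{Z}_k^n$. Since $\mathbf{b} \mapsto -\mathbf{b}$ is a bijection of $\mathbb{Z}_k^n$, this is equivalent to the map $\mathbf{x} \mapsto N\mathbf{x}$ being surjective as a function $\mathbb{Z}_k^n \to \mathbb{Z}_k^n$. For a linear map on a \emph{finite} set, surjectivity is equivalent to bijectivity, which for the $\mathbb{Z}_k$-linear endomorphism given by $N$ is equivalent to $N$ being invertible over $\mathbb{Z}_k$. The cleanest way to phrase this is: surjectivity of multiplication by $N$ means $N$ has a right inverse, and on a finite module this forces two-sided invertibility. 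Alternatively, over the commutative ring $\mathbb{Z}_k$ one can cite that $N$ is invertible iff $\det N$ is a unit in $\mathbb{Z}_k$, and argue surjectivity is equivalent to $\det N$ being a unit; but the finiteness argument is more self-contained and avoids determinant bookkeeping over a ring that may not be a field.

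The only genuinely delicate point, and hence the step I would be most careful about, is the last equivalence in part (2): over a general ring $\mathbb{Z}_k$ (which is not a field unless $k$ is prime), one cannot simply invoke rank-nullity to pass from surjectivity to invertibility. The correct justification is the finiteness of $\mathbb{Z}_k^n$: an endomorphism of a finite set is surjective iff injective iff bijective, and a bijective $\mathbb{Z}_k$-linear map has a linear inverse, giving a two-sided inverse matrix for $N$. I would state this pigeonhole-style argument explicitly rather than leaning on field-specific intuition, since that is exactly where a reader might otherwise object.
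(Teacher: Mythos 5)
Your proposal is correct, and part (1) is essentially identical to the paper's treatment: the paper offers no separate proof of this proposition, regarding it as immediate from the preceding paragraph in which the label of $v_j$ after toggling is computed to be $\lambda(v_j)+\sum_{i=1}^n a_{ij}x_i$, so that winning is equivalent to solving $N\textbf{x}=-\textbf{b}$. Where you go beyond the paper is part (2): the paper silently treats ``solvable for every $\textbf{b}$'' as synonymous with ``$N$ invertible,'' whereas you correctly flag that the forward implication needs an argument over $\mathbb{Z}_k$ (which is not a field for composite $k$), and you supply the right one --- surjectivity of a $\mathbb{Z}_k$-linear endomorphism of the finite module $\mathbb{Z}_k^n$ forces bijectivity by pigeonhole, and the set-theoretic inverse of a bijective linear map is linear, yielding a two-sided matrix inverse. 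The paper's own toolkit would instead route this through Proposition~\ref{Brownprop} (invertibility iff $\det N$ is a unit, with surjectivity giving a right inverse $M$ and hence $\det(N)\det(M)=1$); both arguments are valid, and yours has the merit of being self-contained and of making explicit precisely the step a careless reader would justify with field-specific rank reasoning.
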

\noindent The second way that we encounter linear algebra is through the following process.
\begin{itemize}
\item We prove that given any initial labeling, we can toggle the vertices in such a way that we get a labeling that has a more desirable form.
\item Starting with a labeling in the desirable form, we determine vertices where the number of toggles is unknown, and assign each of the number of toggles of these vertices a variable.
\item We use the labeling that results from the toggling above to generate a system of linear equations.
\item We use linear algebra to determine when a solution exists.
\end{itemize}
This strategy often leads to a system of linear equations that has fewer variables or is otherwise simpler to solve than the system generated by the neighborhood matrix.

In both instances, we need to determine whether a system of equations has a solution regardless of the initial labeling.  That requires knowing when the matrix associated with that system of equations is invertible.  We use the following facts.

\begin{prop} \label{Brownprop} \cite{Brown:93}
Let $A$ be an $n \times n$ matrix over a commutative ring $R$.
\begin{enumerate}
\item $A$ is invertible if and only if $\det(A)$ is a unit in $R$.
\item If we take a multiple of one row of $A$ and add it to another row, then the determinant of the resulting matrix is $\det(A)$.
\item If we switch two rows of $A$, the resulting matrix has determinant $-\det(A)$.
\item If we multiply one row of $A$ by $r \in R$, the resulting matrix has determinant $r\det(A)$.
\end{enumerate}
\end{prop}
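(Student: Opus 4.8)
The plan is to take as the definition of the determinant the Leibniz expansion
$$\det(A) = \sum_{\sigma \in S_n} \operatorname{sgn}(\sigma) \prod_{i=1}^n a_{i,\sigma(i)},$$
which is a legitimate element of $R$ for any commutative ring, and to read the three row-operation statements (parts 2--4) directly off its structural properties. The formula is visibly $R$-linear in each row separately, since each summand contains exactly one factor drawn from each row; this multilinearity immediately gives part 4, as scaling row $i$ by $r$ pulls a single factor of $r$ out of every term. The formula is also alternating: if two rows of $A$ coincide, then pairing each permutation $\sigma$ with its composite with the transposition swapping the two equal row indices cancels the terms in pairs of opposite sign (here commutativity of $R$ is what lets the two rearranged products agree), so the determinant vanishes. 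Swapping rows $i$ and $j$ reindexes the sum by that transposition and flips every sign, giving part 3.

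Part 2 then follows by combining multilinearity with the alternating property. Expanding the matrix obtained by adding $r$ times row $j$ to row $i$ linearly in its $i$th row, the determinant splits as $\det(A) + r\det(A')$, where $A'$ is $A$ with row $i$ replaced by a copy of row $j$. Since $A'$ has two identical rows, $\det(A') = 0$ by the alternating property, leaving $\det(A)$ unchanged.

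The genuinely harder part is statement 1, and the obstacle is that the familiar field-theoretic tools (Gaussian elimination, division) are unavailable over a general commutative ring. The forward direction is cheap: if $AB = I$, then multiplicativity gives $\det(A)\det(B) = \det(I) = 1$, exhibiting $\det(A)$ as a unit. The converse rests on the adjugate identity $A \cdot \operatorname{adj}(A) = \det(A)\,I_n$, from which $\det(A)^{-1}\operatorname{adj}(A)$ is an explicit two-sided inverse once $\det(A)$ is a unit. The subtlety is that both multiplicativity and the adjugate identity must be established over an arbitrary $R$. The clean route is the principle of permanence of polynomial identities: treat the matrix entries as indeterminates over $\mathbb{Z}$, so that each identity becomes an equality of polynomials in the integral domain $\mathbb{Z}[x_{ij}]$; verify it in the field of fractions of that domain, where the classical proofs apply; and then specialize the entries to arbitrary values in any commutative ring via the evaluation homomorphism. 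I expect this specialization step to be the main conceptual point, since it is precisely what licenses transporting the usual determinant algebra from fields to the rings $\mathbb{Z}_k$ that we actually care about.
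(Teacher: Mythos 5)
Your proposal is correct, but there is nothing in the paper to compare it against: the authors state this proposition without proof, citing Brown's monograph on matrices over commutative rings, and use it purely as a black box. So your argument is necessarily a different route --- namely, an actual proof. What you supply is the standard self-contained development: parts 2--4 are read off the Leibniz expansion (multilinearity for part 4; the permutation-pairing argument for the alternating property, which correctly avoids the characteristic-2 trap of arguing $\det = -\det$ and dividing by 2; reindexing by a transposition for part 3; linearity plus the alternating property for part 2), and part 1 follows from multiplicativity in the easy direction and from the adjugate identity $A\,\operatorname{adj}(A) = \operatorname{adj}(A)\,A = \det(A)I_n$ in the converse, with both identities transported from the field case by viewing them as polynomial identities in $\mathbb{Z}[x_{ij}]$, verifying them in the fraction field, and specializing via the evaluation homomorphism. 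This is all sound: $\mathbb{Z}[x_{ij}]$ is an integral domain, so it embeds in its fraction field, and evaluation at arbitrary elements of any commutative ring is a ring homomorphism, so the permanence step is legitimate. What the paper's citation buys is brevity and focus on the game-theoretic content; what your proof buys is self-containedness, and in particular it makes visible exactly why row-reduction reasoning survives over $\mathbb{Z}_k$, a ring with zero divisors: every step you use is either a polynomial identity or an explicit inverse, and no division ever occurs except by the unit $\det(A)$ itself.
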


\section{Acyclic Digraphs and Strongly Connected Components} \label{acyclic}

Our techniques for determining whether the lights out game can be won depend heavily on a strategic ordering of the vertices.  All of our toggling strategies begin with some variation of the following algorithm.

\begin{alg}
\label{duh}
Given a digraph $D$,
\begin{itemize}
\item Order the vertices of $D$ in a particular way.  Say the ordering is $v_1,v_2, \ldots , v_n$.
\item Toggle $v_1$ until it has label 0.
\item Recursively, once $v_m$ has been toggled, toggle $v_{m+1}$ until it has label 0.
\item Continue toggling in this way until $v_n$ has been toggled.
\end{itemize}
\end{alg}
Most often, this algorithm by itself will not win the game, but it often gets us closer to determining whether or not the game can be won.

Recall that a \emph{walk} in $D$ is a sequence of vertices $v_1v_2 \cdots v_n$, where each $v_iv_{i+1}$ is an arc.  A \emph{cycle} is a closed walk (i.e. where $v_1=v_n$) where there are no repeated vertices except the first and last vertex.
The strategy suggested by Algorithm~\ref{duh} is quite effective on \emph{acyclic digraphs}, which are digraphs that contain no cycles.  It is straightforward to prove (see \cite[Prop. 2.1.3 (p. 33)]{Bang-Jensen/Gutin}) that every acyclic digraph has an ordering $v_1,v_2, \ldots , v_n$ of the vertices (called an \emph{acyclic ordering}) such that whenever $v_iv_j \in A(D)$, then $i<j$.  This helps us prove the following.

\begin{thm}
Let $D$ be an acyclic digraph.  Then $D$ is $k$-AW for all $k \ge 2$.
\end{thm}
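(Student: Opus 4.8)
The plan is to exploit the acyclic ordering $v_1, v_2, \ldots, v_n$ guaranteed by the fact quoted just above the statement, under which every arc $v_iv_j$ satisfies $i<j$. I would pursue the combinatorial route suggested by Algorithm~\ref{duh}, running it with the vertices in this acyclic order. The crucial observation is that toggling a vertex $v_i$ changes only the labels of $v_i$ itself and the vertices it dominates; since every vertex that $v_i$ dominates has strictly larger index, toggling $v_i$ leaves the labels of $v_1, \ldots, v_{i-1}$ untouched. This directionality is the entire engine of the proof, and it is precisely what fails once cycles are present.

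Granting this observation, the argument is a straightforward induction on the step of the algorithm. First I would toggle $v_1$ until its label is $0$. Proceeding recursively, once $v_1, \ldots, v_m$ all have label $0$, I toggle $v_{m+1}$ the appropriate number of times to bring its label to $0$; by the observation above this does not disturb the already-zeroed labels of $v_1, \ldots, v_m$. After $v_n$ is processed, every vertex has label $0$, so the game is won. Since the initial labeling was arbitrary and nothing in the argument depends on $k$, the digraph $D$ is $k$-AW for all $k \ge 2$. Alternatively, and perhaps more succinctly, I could argue via the neighborhood matrix $N=A+I_n$: ordering the vertices acyclically forces $a_{ij}=0$ whenever $i>j$, so $N$ is upper triangular with every diagonal entry equal to $1$; hence $\det(N)=1$, a unit in $\mathbb{Z}_k$ for every $k$. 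By Proposition~\ref{Brownprop}(1) $N$ is invertible over $\mathbb{Z}_k$, and the matrix criterion for $k$-AW (part 2 of the first Proposition) then finishes the proof.

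I do not anticipate a genuine obstacle here, since the content of the theorem is captured entirely by the directionality of the arcs under an acyclic ordering. The one point requiring care is the explicit justification that later toggles never undo earlier ones, i.e. that $v_i$ dominates only higher-indexed vertices. I would state this step explicitly rather than leave it implicit, since it is the exact mechanism that makes the greedy Algorithm~\ref{duh} succeed on acyclic digraphs, and it is the natural place where the proof would break down if the acyclicity hypothesis were dropped.
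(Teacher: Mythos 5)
Your proposal is correct, and its main argument is essentially the paper's own proof: apply Algorithm~\ref{duh} along an acyclic ordering, observing that each toggled vertex dominates only higher-indexed vertices, so an induction shows all previously zeroed labels are preserved. Your alternative argument via the neighborhood matrix (acyclic ordering makes $N$ upper triangular with $1$'s on the diagonal, hence $\det(N)=1$ is a unit in $\mathbb{Z}_k$) is also valid and is a nice supplement, but the core reasoning matches the paper.
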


\begin{proof}
Let $\lambda: V(D) \rightarrow \mathbb{Z}_k$ be any labeling of $V(D)$.  As noted above, there is an acyclic ordering $v_1,v_2, \ldots , v_n$ of $V(D)$. We apply Algorithm~\ref{duh} to this ordering.  No vertex can dominate any vertex that comes before it in the ordering, so every time we toggle a vertex, it does not affect the labeling of the vertices before it.  Thus, an easy induction proves that after toggling each $v_m$, this leaves $v_1,v_2, \ldots , v_m$ all having label 0.  Applying this result to $m=n$ gives us the zero labeling for $V(D)$, and thus $D$ is $k$-AW.
\end{proof}

We now look at connectivity in digraphs.  We say that a digraph $D$ is \emph{strongly connected} if for every $v,w \in A(D)$, there is both a walk from $v$ to $w$ and a walk from $w$ to $v$.  On any digraph $D$, a \emph{strong component} is a maximal subdigraph that is strongly connected.

\begin{prop} \label{components}
 \cite[p. 17]{Bang-Jensen/Gutin}
Let $D$ be a digraph, and let $D_1, D_2, \ldots , D_t$ be the strongly connected components of $D$.
\begin{enumerate}
\item \label{partition} The sets $V(D_1), V(D_2), \ldots , V(D_t)$ form a partition of $V(D)$.
\item \label{acyclicorder} We can order the $D_i$ in such a way that if $v \in V(D_i)$ and $w \in V(D_j)$ with $vw \in A(D)$ and $i \ne j$, then $i<j$.  This is called an \emph{acyclic ordering} of the strong components.
\end{enumerate}
\end{prop}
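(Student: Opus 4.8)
The plan is to organize both parts around a single idea: the relation of \emph{mutual reachability}. For vertices $v, w \in V(D)$, declare $v \sim w$ if there is a walk from $v$ to $w$ and a walk from $w$ to $v$ (the trivial walk gives $v \sim v$). First I would verify that $\sim$ is an equivalence relation: reflexivity is the trivial walk, symmetry is immediate from the symmetric form of the definition, and transitivity follows by concatenating walks. Its equivalence classes then automatically partition $V(D)$, so Part~1 reduces to showing that these classes are precisely the vertex sets of the strong components.

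For that identification I would argue two inclusions. Given a strong component $D_i$, any two of its vertices are mutually reachable inside $D_i$ and hence in $D$, so $V(D_i)$ lies in a single class. Conversely, fix a class $C$ and let $H$ be the subdigraph of $D$ induced on $C$. The key observation is that any walk in $D$ between two vertices of $C$ stays inside $C$: if $u$ lies on a walk from $v$ to $w$ with $v, w \in C$, then $v$ reaches $u$, and $u$ reaches $w$ which (since $w \sim v$) reaches $v$, so $u \sim v$ and $u \in C$. Hence $H$ is strongly connected. Maximality follows because adjoining any $x \notin C$ to a strongly connected subdigraph containing $C$ would make $x$ mutually reachable with the vertices of $C$, forcing $x \in C$, a contradiction. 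Thus each class is the vertex set of a maximal strongly connected subdigraph, i.e.\ a strong component, which proves Part~1.

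For Part~2 I would pass to the \emph{condensation} $D^*$, whose vertices are $D_1, \ldots, D_t$ and which has an arc $D_i \to D_j$ (for $i \ne j$) exactly when some vertex of $D_i$ dominates some vertex of $D_j$ in $D$. The crucial step is to show $D^*$ is acyclic. If $D^*$ had a cycle through distinct components $D_{i_1}, \ldots, D_{i_r}$, then using strong connectivity within each component together with the connecting arcs, every vertex of $D_{i_1}$ would reach every vertex of $D_{i_2}$, and going around the cycle, back again; this makes $D_{i_1}$ and $D_{i_2}$ mutually reachable and hence the same class, contradicting distinctness. With $D^*$ acyclic, I would apply the acyclic-ordering fact quoted earlier (\cite[Prop.~2.1.3]{Bang-Jensen/Gutin}) to order its vertices as $D_1, \ldots, D_t$ so that every arc $D_i \to D_j$ of $D^*$ has $i < j$. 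Any arc $vw \in A(D)$ with $v \in V(D_i)$, $w \in V(D_j)$ and $i \ne j$ induces the arc $D_i \to D_j$ in $D^*$, whence $i < j$, as required.

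The main obstacle is the identification in Part~1, specifically the claim that the subdigraph induced on a class is strongly connected: one must rule out the possibility that the certifying walks leave the class, which is exactly what the ``walks stay inside $C$'' argument above secures. Once that and the acyclicity of $D^*$ are in hand, both parts follow routinely.
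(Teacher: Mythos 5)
Your proof is correct, but there is nothing in the paper to compare it against: the paper states this proposition with a citation to \cite[p.~17]{Bang-Jensen/Gutin} and gives no proof of its own, treating it as a known background fact. Your argument is the standard textbook one, and every step checks out. For Part~1, defining $v \sim w$ by mutual reachability and showing it is an equivalence relation gives the partition for free; the genuinely necessary point, which you correctly isolate, is that the induced subdigraph on a class $C$ is itself strongly connected, and your observation that every internal vertex $u$ of a walk between two vertices of $C$ satisfies $u \sim v$ (via the prefix of the walk, the suffix, and the return walk from $w$ to $v$) is exactly what closes that gap. Your maximality argument then identifies the classes with the strong components. For Part~2, passing to the condensation $D^*$, proving it acyclic by the cycle-collapsing argument (a cycle in $D^*$ would merge two distinct classes, contradicting Part~1), and then invoking the acyclic-ordering fact that the paper itself quotes earlier (\cite[Prop.~2.1.3]{Bang-Jensen/Gutin}) is precisely how this is done in the literature; the final observation that any arc of $D$ crossing between components induces an arc of $D^*$ completes the claim. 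One could quibble that in Part~1 you should note explicitly that a given component $D_i$, being contained in the strongly connected induced subdigraph on its class and being maximal, must equal that induced subdigraph (so that $V(D_i)$ is the whole class), but this is a one-line remark and does not affect the soundness of the argument.
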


This helps us reduce the problem of determining winnability on digraphs to determining winnability on strongly connected digraphs.

\begin{thm} \label{strongreduce}
Let $D$ be a digraph, and let $D_1, D_2, \ldots , D_t$ be the strong components of $D$.  Then $D$ is $k$-AW if and only if each $D_i$ is $k$-AW.
\end{thm}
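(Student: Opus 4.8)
The plan is to route everything through the linear-algebraic characterization of $k$-AW established earlier: a digraph is $k$-AW exactly when its neighborhood matrix is invertible over $\mathbb{Z}_k$, which by Proposition~\ref{Brownprop}(1) happens exactly when the determinant of that matrix is a unit in $\mathbb{Z}_k$. The theorem will then fall out of a block-triangular factorization of $\det(N)$, where $N$ is the neighborhood matrix of $D$.

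First I would fix a vertex ordering of $D$ that lists all vertices of $D_1$, then all of $D_2$, and so on, where $D_1, D_2, \ldots, D_t$ are taken in an acyclic ordering of the strong components as guaranteed by Proposition~\ref{components}(\ref{acyclicorder}); inside each component the order is arbitrary. The key structural observation is that, relative to this ordering, $N$ is block upper triangular. Indeed, writing $N = [a_{rc}]$, an off-diagonal entry $a_{rc} = 1$ records an arc from the $r$-th vertex to the $c$-th vertex. If these lie in different components $D_i$ and $D_j$, the acyclic ordering forces $i < j$, so the tail precedes the head and $r < c$; hence a nonzero entry strictly below the block diagonal would require an arc from a later component to an earlier one, and no such arc exists. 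Thus every block below the diagonal vanishes, while the $i$-th diagonal block is precisely the neighborhood matrix $N_i$ of $D_i$ (its entries are the within-$D_i$ arcs together with the diagonal $1$'s).

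Next I would apply the determinant formula for a block upper triangular matrix, which over any commutative ring equals the product of the determinants of the diagonal blocks, to obtain $\det(N) = \prod_{i=1}^{t} \det(N_i)$. Since $\mathbb{Z}_k$ is a commutative ring, $\det(N)$ is a unit if and only if each factor $\det(N_i)$ is a unit. Translating back through Proposition~\ref{Brownprop}(1) and the neighborhood-matrix characterization of $k$-AW then yields that $D$ is $k$-AW if and only if every $D_i$ is $k$-AW, as desired.

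The main obstacle is not discovery but justification: I must be careful that the two facts I lean on hold over $\mathbb{Z}_k$, which need not be an integral domain. Both are valid over an arbitrary commutative ring. The block-triangular determinant formula follows from the Leibniz expansion, since any permutation contributing a nonzero term must stabilize each diagonal block, or alternatively by iterating the row operations in Proposition~\ref{Brownprop}(2)--(4); and the unit claim is immediate, since if $ab$ is invertible then $a\cdot\bigl(b(ab)^{-1}\bigr) = 1$, and symmetrically for $b$. I would state these points explicitly so as not to silently assume field-like behavior. A more combinatorial route for the ``if'' direction would instead process the components in acyclic order and repeatedly apply the $k$-AW property of each $D_i$, using that toggling inside $D_i$ never disturbs the already-zeroed earlier components; but the determinant argument settles both directions uniformly, so that is the version I would write up.
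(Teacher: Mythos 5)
Your proof is correct, but it takes a genuinely different route from the paper. You reduce everything to linear algebra: with the vertices listed component-by-component in the acyclic ordering of Proposition~\ref{components}(\ref{acyclicorder}), the neighborhood matrix $N$ becomes block upper triangular with the component neighborhood matrices $N_i$ on the diagonal (this uses, implicitly but harmlessly, the standard fact that strong components are induced subdigraphs), so $\det(N)=\prod_i\det(N_i)$, and a product in a commutative ring is a unit exactly when each factor is; both directions then follow at once from the determinant criterion. The paper instead argues combinatorially in terms of the game itself: for the ``if'' direction it gives an explicit winning strategy (zero out $D_1$, then $D_2$, etc., using that later components never dominate earlier ones), and for the ``only if'' direction it runs an induction showing that any winning toggling of $D$, restricted to the components in order, must itself win each component game --- an argument that, interestingly, still has to invoke invertibility of the $N_m$ at one point to conclude that components starting at the zero labeling are not toggled at all. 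Your version is shorter and settles both directions uniformly, and you are right to flag that the block-determinant formula and the unit-product fact must be checked over an arbitrary commutative ring rather than a field; the paper's version buys something different, namely an explicit algorithmic strategy in the spirit of Algorithm~\ref{duh} and a slightly stronger structural statement about how winning togglings of $D$ decompose across components, which is the style of reasoning reused in the later sections on tournaments.
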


\begin{proof}
Assume that the ordering $D_1, D_2, \ldots , D_t$ is an acyclic ordering of the strong components.  We first assume each $D_i$ is $k$-AW and prove that $D$ is $k$-AW.  So let $\lambda$ be a labeling of $V(D)$ with labels in $\mathbb{Z}_k$.  We then win the game by applying the following variation on Algorithm~\ref{duh}.

\begin{itemize}
\item Toggle the vertices of $D_1$ so that all vertices have label 0.  This can be done since $D_1$ is $k$-AW.
\item Recursively, once the vertices of $D_m$ have been toggled so as to leave each vertex in $D_i$ with label 0 for each $1 \le i \le m$, we toggle the vertices of $D_{m+1}$ until all vertices in $D_{m+1}$ have label 0.  As above, this can be done since $D_{m+1}$ is $k$-AW.  Note that the vertices in $D_{m+1}$ do not dominate any vertices in any of the $D_i$ for $1 \le i \le m$, so all vertices in $D_i$ for $1 \le i \le m+1$ now have label 0.
\item Continue toggling in this way until the vertices in $D_t$ have been toggled.
\end{itemize}

As noted in the algorithm, once we toggle the vertices in $D_t$, all vertices in $D_i$ for $1 \le i \le t$ have label 0.  Since the $V(D_i)$ partition $V(D)$ by Proposition~\ref{components}(\ref{partition}), all vertices in $D$ have label 0, which makes $\lambda$ $k$-winnable.  Since $\lambda$ was arbitrary, $D$ is $k$-AW.

We now assume that $D$ is $k$-AW and prove that each $D_i$ is $k$-AW.  We actually prove something slightly stronger.  We prove that for each $i$,
\begin{itemize}
\item Every labeling $\lambda$ on $V(D_i)$ is $k$-winnable.
\item For any labeling $\pi$ on $V(D)$, if we apply the winning toggles to each $D_i$ in order (i.e. $D_1$ first, $D_2$ second, etc), then at the end of toggling the vertices of $D_i$, all vertices in $D_j$ have label 0 for all $1 \le j \le i$.
\end{itemize}

We prove this by induction.  For $i=1$, let $\lambda$ be any labeling of $D_1$.  We can extend $\lambda$ to $V(D)$ by defining $\lambda(v)$ to be any element of $\mathbb{Z}_k$ (it does not matter which element) for all $v \notin V(D_1)$.  Since $D$ is $k$-AW, this extended labeling is $k$-winnable.  Thus, we can toggle the vertices of $D$ so that every vertex has label 0.  Consider the toggles of this winning strategy that are done on $V(D_1)$.  None of the vertices outside of $V(D_1)$ dominate any of the vertices in $V(D_1)$.  Thus, in order for all vertices of $D$ (including all vertices of $D_1$) to have label 0 at the end of the winning toggling, all vertices of $D_1$ must have label 0 after we have toggled the vertices in $D_1$.  Thus, $\lambda$ is $k$-winnable, and so $D_1$ is $k$-AW.

Note that if $\pi$ is any labeling of $D$, the above argument can be used to prove that any winning toggling on $D$ that we restrict to $V(D_1)$ must result in every vertex of $D_1$ having label 0.  This completes the base case.

For induction, let $\lambda$ be a labeling of $D_i$, and extend $\lambda$ to all of $V(D)$ by defining $\lambda(v)=0$ for all $v \notin V(D_i)$.  Our induction hypothesis implies that our winning toggling must include, for all $1 \le m \le i-1$, all vertices in $D_m$ having labeling 0 after their vertices are toggled.  But since they begin with their vertices having label 0 to begin with, the toggling for each $D_m$ must be a solution to $N_m \textbf{x}_m = \textbf{0}$, where $N_m$ is the neighborhood matrix of $D_m$.  But each $D_m$ is $k$-AW, and so each $N_m$ is invertible.  It follows that each $\textbf{x}_m = \textbf{0}$, and so none of the vertices in any of the $D_i$'s are toggled.  Since no vertices in $V(D_m)$ for $m > i$ dominate any vertices in $D_i$, the toggles of vertices in $D_i$ must result in all vertices in $D_i$ having label 0.  Thus, $\lambda$ on $V(D_i)$ is $k$-winnable, and so $D_i$ is $k$-AW.

It now suffices to prove that for all labelings $\pi$ on $D$, when we apply a winning toggling to vertices of $D_m$, $1 \le m \le i$, in order, the vertices of each $D_m$ have label 0 once the vertices in $D_m$ have been toggled.  This is true for $1 \le m \le i-1$ by induction.  Once this has been done, we apply the argument in the previous paragraph to prove that after the vertices of $D_i$ have been toggled, the vertices of $D_i$ all have label 0.
\end{proof}

It follows from Theorem~\ref{strongreduce} that once we have determined which strongly connected digraphs are $k$-AW, we can apply this to the strong components of any digraph to determine whether or not it is $k$-AW.  So we concentrate our efforts on strongly connected digraphs.

\section{Using Feedback Arc Sets to Determine Winnability} \label{feedback}

The remainder of our paper will be focused on winnability in strongly connected tournaments.  An important tool in our results will be the use of feedback arc sets.  We use the following definitions.

\begin{Def}
Let $D$ be a digraph, and let $\sigma = v_1, v_2, \ldots , v_n$ be an ordering of the vertices in $D$.
\begin{enumerate}
\item The \emph{feedback arc set with respect to $\sigma$} is the set of all arcs $v_jv_i \in A(D)$ such that $i<j$.
\item A \emph{feedback arc set} is a feedback arc set with respect to some ordering of the vertices of $D$.
\item A \emph{minimum feedback arc set} is a feedback arc set of minimum cardinality among all possible orderings of the vertices of $D$.
\item If an arc $vw$ is in a feedback arc set $S$, we call $v$ a \emph{tail feedback vertex} and $w$ a \emph{head feedback vertex}.
\end{enumerate}
\end{Def}

Our definition of feedback arc sets is equivalent to the definition given in \cite{Isaak/Narayan:04}, which defines a feedback arc set as a set of arcs that when reversed makes the resulting graph acyclic.  These are called \emph{reversing sets} in \cite{Barthelemy:95}, a term apparently no longer in use.  The standard definition of feedback arc sets appears to be the one given in \cite{Barthelemy:95} and \cite{Bang-Jensen/Gutin}, which replaces \emph{reversed} in the above definition with \emph{removed}.  This is a more general definition.  Note that the definition of \emph{minimum} feedback arc set is equivalent with both definitions of feedback arc sets.

%To see the connection between the two definitions, if $S$ is a feedback arc set, let $D'$ be the digraph obtained by reversing the arcs in $S$.  Since $D'$ is acyclic, it has an acyclic ordering.  In $D$, $S$ is the feedback arc set with respect to this ordering.  Conversely, it is easy to see that reversing the feedback arc set with respect to an ordering of the vertices will result in an acyclic digraph, since reversing all the ``backward'' arcs makes them directed ``forward'', and so cannot come back to form a cycle.

If we have a feedback arc $v_{i+1}v_i$ between consecutive vertices in an ordering of the vertices of $D$, it is clear that reversing the order of $v_i$ and $v_{i+1}$ does not create any new feedback arcs, and thus has fewer feedback arcs than the original ordering.  Thus, when we have a minimum feedback arc set, we get the following.

\begin{prop} \label{feedbackprop}
Let $D$ be a digraph, let $S$ be a minimum feedback arc set, and let $\sigma = v_1, v_2, \ldots , v_n$ be an ordering of the vertices whose feedback arc set is $S$.  If $v_jv_i \in S$, then $j-i \ge 2$.
\end{prop}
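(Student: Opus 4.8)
The plan is to argue by contradiction using the adjacent-transposition move suggested in the remarks immediately preceding the statement. Since $S$ is the feedback arc set with respect to $\sigma$, the hypothesis $v_jv_i \in S$ already forces $i<j$, so $j-i \ge 1$; the entire content of the proposition is to rule out the case $j-i=1$. I would therefore suppose, for contradiction, that $j=i+1$, so that $v_{i+1}v_i \in S$, and then construct from $\sigma$ a new ordering with strictly fewer feedback arcs, contradicting the minimality of $S$.

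The move is to let $\sigma'$ be the ordering obtained from $\sigma$ by interchanging $v_i$ and $v_{i+1}$ and leaving every other vertex fixed, namely $\sigma' = v_1, \ldots , v_{i-1}, v_{i+1}, v_i, v_{i+2}, \ldots , v_n$, and to compare its feedback arc set $S'$ with $S$ arc by arc. The key structural observation is that this transposition alters the relative order of exactly one pair of vertices, namely $v_i$ and $v_{i+1}$: for any index $m \notin \{i,i+1\}$, the vertex $v_m$ precedes $v_i$ in $\sigma'$ exactly when it does so in $\sigma$, and the same holds for $v_{i+1}$. Since membership in a feedback arc set depends only on whether the head of an arc precedes its tail, every arc not joining $v_i$ to $v_{i+1}$ retains its feedback status under the swap, and so contributes equally to $S$ and to $S'$.

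It then remains only to track the arcs between $v_i$ and $v_{i+1}$. In $\sigma$ the arc $v_{i+1}v_i$ runs backward, from position $i+1$ to position $i$, and hence lies in $S$; after the transposition it runs forward and so drops out of $S'$. The only arc that the swap could newly place in the feedback set is $v_iv_{i+1}$, which is absent in the oriented setting of interest, so no arc is added. Consequently $|S'| < |S|$, contradicting the assumption that $S$ is a minimum feedback arc set, and therefore $j - i \ge 2$.

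I expect the one delicate step to be the bookkeeping in the middle paragraph, namely making precise the claim that transposing two adjacent vertices preserves the feedback status of every arc not internal to the transposed pair. The cleanest route I see is to phrase feedback membership entirely in terms of the relative order of the endpoints of each arc, so that preserving all pairwise orders except that of $v_i$ and $v_{i+1}$ immediately yields the required invariance and isolates $v_{i+1}v_i$ as the unique arc whose status changes.
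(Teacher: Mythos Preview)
Your proposal is correct and follows essentially the same approach as the paper: the paper's argument, given in the paragraph immediately preceding the proposition, is precisely to swap $v_i$ and $v_{i+1}$ and observe that this creates no new feedback arcs while removing $v_{i+1}v_i$. Your write-up is simply a more careful version of that one-line argument, and your explicit remark that the potential new feedback arc $v_iv_{i+1}$ is absent in the oriented setting makes precise an assumption the paper leaves implicit.
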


Finally, we see how feedback arc sets help us understand the result of applying Algorithm~\ref{duh} to an ordering of the vertices in the $k$-lights out game.

\begin{lem} \label{setupwin}
Let $D$ be a digraph with labeling $\lambda: V(D) \rightarrow \mathbb{Z}_k$; let $\sigma$ be an ordering of $V(D)$; and let $S$ be the feedback arc set with respect to $\sigma$.
\begin{enumerate}
\item \label{standardform} The vertices of $D$ can be toggled in such a way that all non-head feedback vertices have label 0.
\item \label{variables} Suppose $\lambda(v_i)=0$ for all non-head feedback vertices $v_i$.  If we toggle each vertex in the order given by $\sigma$ such that at the end of the toggling, each non-head feedback vertex is still 0, then each non-head feedback vertex $v_i$ is toggled $-b_i$ times, where $b_i$ is the label of $v_i$ after $v_{i-1}$ is toggled.
\end{enumerate}
\end{lem}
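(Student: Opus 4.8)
The plan is to isolate the single structural fact that distinguishes non-head feedback vertices and to let both parts fall out of it. If $v_i$ is not the head of any feedback arc, then every arc into $v_i$ comes from a vertex preceding $v_i$ in $\sigma$: an arc $v_j v_i$ with $j > i$ is by definition a feedback arc with head $v_i$, so the absence of such arcs is exactly the statement that $v_i$ is a non-head feedback vertex. The consequence that drives everything is that once $v_i$ has been toggled in the order prescribed by $\sigma$, no later toggle of $v_{i+1}, \ldots, v_n$ can change the label of $v_i$, since none of those vertices dominates $v_i$.

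For part~(\ref{standardform}), I would run Algorithm~\ref{duh} on the ordering $\sigma$, toggling each vertex in turn until it reaches label $0$. Immediately after $v_i$ is processed it has label $0$; if $v_i$ is a non-head feedback vertex, then by the observation above its label is frozen for the remainder of the process, so it still reads $0$ at the end. Hence every non-head feedback vertex finishes with label $0$, as required. Head feedback vertices may be disturbed by later toggles, but the statement makes no claim about them.

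For part~(\ref{variables}), I would track the label of a fixed non-head feedback vertex $v_i$ through the toggling. By hypothesis $v_i$ starts at $\lambda(v_i)=0$, and $b_i$ denotes its label immediately after $v_{i-1}$ is toggled, that is, just before $v_i$ itself is toggled. If $v_i$ is toggled $t_i$ times during its step, then each such toggle raises its own label by one, so its label becomes $b_i + t_i$; by the freezing observation this is also its label at the very end of the toggling. The hypothesis that $v_i$ is still $0$ at the end then forces $b_i + t_i \equiv 0 \pmod{k}$, so $t_i \equiv -b_i$, which is the claimed toggle count.

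I expect no serious obstacle once the defining property of non-head feedback vertices is pinned down, since both statements reduce to the remark that later toggles cannot reach such a vertex. The only point requiring care is the bookkeeping about the order of events: being precise that $b_i$ is measured after $v_{i-1}$ but before $v_i$, and that ``toggled $-b_i$ times'' is read modulo $k$, so that the final-label computation lines up correctly.
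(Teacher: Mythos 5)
Your proof is correct and follows essentially the same route as the paper's: both parts rest on the observation that a non-head feedback vertex is never dominated by a vertex appearing later in $\sigma$, so its label is frozen once it has been toggled, which gives part~(\ref{standardform}) via Algorithm~\ref{duh} and forces the toggle count $-b_i$ in part~(\ref{variables}). Your write-up is slightly more explicit than the paper's in unpacking the definition of a feedback arc and in the mod-$k$ bookkeeping, but the argument is the same.
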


\begin{proof}
For (\ref{standardform}), we apply Algorithm~\ref{duh} to $D$ with the labeling $\lambda$. Let $v \in V(D)$ be a non-head feedback vertex.  After we finish toggling $v$, it has a label of 0.  Since no vertex toggled after $v$ dominates $v$, this label will not change for the remainder of the algorithm.  Thus, the final labeling for $v$ is 0, which completes the proof.  Note that if any tail feedback vertex is toggled as part of the algorithm, it may cause the final label of the vertex it dominates (i.e. a head feedback vertex) to have a nonzero final labeling.

For (\ref{variables}), $v_i$ is toggled right after $v_{i-1}$ is toggled, which means it is toggled when it has label $b_i$.   Since $v_i$ is a non-head feedback vertex, all vertices toggled after $v_i$ do not dominate $v_i$.  Thus, $v_i$ must have label 0 right after it is toggled, which can only happen if it is toggled $-b_i$ times.  The result follows directly.
\end{proof}

\section{Winnability in Strongly Connected Tournaments} \label{strong}
In this section, we focus our attention on tournaments that are strongly connected.  A \emph{tournament} is a digraph $D$ where for every distinct pair of vertices $v,w \in V(D)$, either $vw \in A(D)$ or $wv \in A(D)$, but not both.  Our results in this section will depend on the structure of the subdigraph $D_S$ that is arc-induced from a minimum feedback arc set $S$.  We consider subdigraphs $D_S$ that belong to the following classes.

\begin{Def}
Let $D$ be a digraph.
\begin{enumerate}
\item We say that $D$ is a \emph{directed path} if we can order $V(D) = \{ v_1, v_2, \ldots , v_n \}$ in such a way that $A(D) = \{ v_iv_{i+1} : 1 \le i \le n-1 \}$
\item If $s,t \ge 0$, we say that $D$ is an \emph{$(s,t)$-directed star} if we have $V(D) = \{ v, v_1, v_2, \ldots , v_s, w_1, w_2 , \ldots , w_t \}$ and $A(D) = \{ v_iv, vw_j : 1 \le i \le s$ and $1 \le j \le t \}$.
\end{enumerate}
\end{Def}
Note that if we ignore the directions of the arcs in a directed star, it becomes an ordinary star graph.

Our first main result determines winnability of strongly connected tournaments that have minimum feedback arc sets that arc-induce directed paths.  In the proof, we encounter the following sequence of matrices.  For each $n \ge 1$, we define the $n \times n$ matrix $A_n$ as follows.  Let $A_1 = [2]$ and $A_2 = \left[ \begin{matrix} 1 & 1 \\ -1 & 2 \end{matrix} \right]$.  For each $n \ge 3$, we define
\begin{equation*}
A_n = \left[ \begin{matrix}
1 & 1 &  0 & 0 & \hdots & 0 \\ 
-1 & 1 & 1 & 0 & \hdots & 0 \\ 
0 & -1 & 1 & 1 & \hdots & 0 \\ 
\vdots   & \vdots  &  \ddots & \ddots  & \ddots & \vdots \\ 
 0 & 0 & \hdots &  -1       & 1 &  1 \\
 0 & 0 & \hdots & 0 & -1 & 2
\end{matrix} \right]
\end{equation*}

\noindent Also, recall the sequence of Fibonacci numbers $F_n$ given by $F_0 = 0$, $F_1=1$ and $F_n = F_{n-1}+F_{n-2}$ for all $n \ge 2$.

\begin{lem} \label{fiblem}
For each $n \ge 1$, $\det(A_n)=F_{n+2}$.
\end{lem}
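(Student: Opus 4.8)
The plan is to prove $\det(A_n) = F_{n+2}$ by induction on $n$, using cofactor expansion along the last row to establish a recurrence that matches the Fibonacci recurrence. The matrices $A_n$ are tridiagonal, which makes cofactor expansion along the final row especially clean, since that row has only two nonzero entries: a $-1$ in position $(n, n-1)$ and a $2$ in position $(n,n)$.

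First I would verify the base cases directly: $\det(A_1) = \det([2]) = 2 = F_3$ and $\det(A_2) = \det\left(\left[\begin{smallmatrix} 1 & 1 \\ -1 & 2 \end{smallmatrix}\right]\right) = 2 + 1 = 3 = F_4$, so the claim holds for $n=1,2$. To run the induction for $n \ge 3$, I would expand $\det(A_n)$ along its bottom row. The entry $2$ in position $(n,n)$ has cofactor equal to $\det(A_{n-1})$, since deleting the last row and column of $A_n$ yields exactly $A_{n-1}$ (this requires checking that the top-left $(n-1)\times(n-1)$ block of $A_n$ coincides with $A_{n-1}$, which is visible from the banded structure). The entry $-1$ in position $(n,n-1)$ contributes $-(-1)\cdot(-1)^{n+(n-1)} M$, where $M$ is the minor obtained by deleting row $n$ and column $n-1$. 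That minor has a nearly upper-triangular form: its last column comes from what was column $n$ of $A_n$, which has a single $1$ in the second-to-last position and a $0$ elsewhere after the deletion, so one further expansion reduces $M$ to $\det(A_{n-2})$ up to sign. Carefully tracking the signs, I expect the two terms to combine into
\begin{equation*}
\det(A_n) = 2\det(A_{n-1}) - \det(A_{n-2}).
\end{equation*}

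This is not literally the Fibonacci recurrence, so the main obstacle is reconciling it with $F_{n+2} = F_{n+1} + F_n$. The resolution is that $A_n$ differs from a uniform tridiagonal matrix only in the bottom-right entry being $2$ rather than $1$. I would therefore introduce (or reason about) the auxiliary matrix $A_n'$ that is identical to $A_n$ except with entry $1$ in the bottom-right corner; the uniform tridiagonal determinant $\det(A_n')$ satisfies the continuant recurrence $\det(A_n') = \det(A_{n-1}') - \det(A_{n-2}')$ and evaluates to a shifted Fibonacci-like sequence. Writing $A_n$ as $A_n'$ with an extra $+1$ in the corner, cofactor expansion gives $\det(A_n) = \det(A_n') + \det(A_{n-1}')$, and then a short computation relating these continuant values to Fibonacci numbers should yield $\det(A_n) = F_{n+2}$. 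The cleanest route, which I would adopt if the sign bookkeeping above cooperates, is simply to confirm the recurrence $\det(A_n) = \det(A_{n-1}) + \det(A_{n-2})$ directly by a more careful cofactor expansion that exploits the banded structure, after which the base cases finish the induction immediately.

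The step I expect to be most delicate is the sign tracking in the cofactor expansion of the off-diagonal $-1$ entry, since the minor's sign depends on the parity of $n$ and on the position of the deleted column; a clean way to avoid errors is to perform one row operation (adding a suitable multiple of the penultimate row to the last row) using Proposition~\ref{Brownprop}(2) to eliminate the $-1$ in the bottom-left of the relevant block before expanding, so that the determinant reduces transparently to a combination of $\det(A_{n-1})$ and $\det(A_{n-2})$.
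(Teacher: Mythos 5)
Your overall route---induction on $n$ via cofactor expansion along the last row, together with an auxiliary uniform tridiagonal matrix---is genuinely different from the paper's proof (which row-reduces $A_n$ from the top down, showing that row $i$ can be brought to the form $[\,0 \cdots 0\ F_{i+1}\ F_i\ 0 \cdots 0\,]$ and ending with an upper triangular matrix having $F_{n+2}$ in the corner), and your route can be made to work. But as written, two of your three key identities are false, and they fail at exactly the points you flagged as delicate. First, deleting the last row and column of $A_n$ does \emph{not} yield $A_{n-1}$: the $(n-1,n-1)$ entry of $A_n$ is $1$, while the bottom-right entry of $A_{n-1}$ is $2$. The $(n,n)$ minor of $A_n$ is your auxiliary matrix $A_{n-1}'$, not $A_{n-1}$. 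Consequently the recurrence you predict, $\det(A_n)=2\det(A_{n-1})-\det(A_{n-2})$, is numerically false: $\det(A_3)=5=F_5$, whereas $2\det(A_2)-\det(A_1)=2\cdot 3-2=4$. Second, your continuant recurrence has the wrong sign. For a tridiagonal matrix the recurrence is $d_n=a_nd_{n-1}-b_{n-1}c_{n-1}d_{n-2}$, where $b_{n-1}$ and $c_{n-1}$ are the adjacent super- and subdiagonal entries; here $b_{n-1}c_{n-1}=(1)(-1)=-1$, so $\det(A_n')=\det(A_{n-1}')+\det(A_{n-2}')$, not $\det(A_{n-1}')-\det(A_{n-2}')$. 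This is not cosmetic: with your minus sign the sequence $\det(A_n')$ would be periodic ($1,2,1,-1,-2,-1,\dots$), not Fibonacci-like, and the whole plan collapses. The sign flip contributed by the $-1$ subdiagonal is precisely what produces the Fibonacci recurrence.

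The repair is short, and the one identity you did state correctly is the heart of it. By linearity of the determinant in the last row, $\det(A_n)=\det(A_n')+\det(A_{n-1}')$, as you wrote. Now prove $\det(A_n')=F_{n+1}$ by induction: expanding $A_n'$ along its last row, the $(n,n)$ term contributes $\det(A_{n-1}')$, and the $(n,n-1)$ term contributes $(-1)\cdot(-1)^{2n-1}$ times a minor that is block triangular with diagonal blocks $A_{n-2}'$ and $[1]$, hence contributes $+\det(A_{n-2}')$; with base cases $\det(A_1')=1=F_2$ and $\det(A_2')=2=F_3$, this gives $\det(A_n')=F_{n+1}$. Then $\det(A_n)=F_{n+1}+F_n=F_{n+2}$. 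Note also that your fallback plan---establishing $\det(A_n)=\det(A_{n-1})+\det(A_{n-2})$ by a single ``more careful'' expansion along the bottom row---cannot work directly, because the minors produced by that expansion are $A'$-type matrices rather than $A$-type ones; the auxiliary sequence is unavoidable in this approach.
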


\begin{proof}
For $n=1,2$, the result is easy to check.  For $n \ge 3$, we row-reduce $A_n$ to an echelon form that has the same determinant as $A_n$ and has determinant $F_{n+2}$.

We first prove by induction that for each $1 \le i \le n-1$, we can row-reduce $A_n$ to a matrix with the same determinant as $A_n$ such that
\begin{itemize}
\item For $1 \le m \le i-1$, the leading entry of row $m$ is 1 and is located in column $m$.
\item Row $i$ will be $[\begin{matrix} 0 & \hdots & 0 & F_{i+1} & F_i & 0 & \hdots & 0 \end{matrix}]$, where $F_{i+1}$ is in column $i$.
\end{itemize}
For $i=1$, the first two entries of row 1 are both 1 (i.e. $F_2$ and $F_1$), which proves the base case.  If we assume our result for $i$, we already have the leading entry of 1 in row $m$, column $m$ for $1 \le m \le i-1$.  For the inductive step, we need only concern ourselves with rows $i$ and $i+1$.  Note that these rows are currently the following, where each leading entry is in column $i$.
\begin{equation*}
\left[ \begin{matrix} 0 & \hdots & 0 & F_{i+1} & F_i & 0 & 0 & \hdots & 0 \\
0 & \hdots & 0 & -1 & 1 & 1 & 0 & \hdots & 0 \end{matrix} \right]
\end{equation*}
We now multiply the lower row by $-1$ and switch the two rows.  By Proposition~\ref{Brownprop}, this multiplies the determinant by $-1 \cdot -1 = 1$, so the determinant is unchanged.  We get
\begin{equation*}
\left[ \begin{matrix} 0 & \hdots & 0 & 1 & -1 & -1 & 0 & \hdots & 0 \\
0 & \hdots & 0 & F_{i+1} & F_i & 0 & 0 & \hdots & 0 \end{matrix} \right]
\end{equation*}
We now add $-F_{i+1}$ times row $i$ to row $i+1$.  By Proposition~\ref{Brownprop}, this leaves the determinant unchanged.  We get
\begin{equation*}
\left[ \begin{matrix} 0 & \hdots & 0 & 1 & -1 & -1 & 0 & \hdots & 0 \\
0 & \hdots & 0 & 0 & F_i+F_{i+1} & F_{i+1} & 0 & \hdots & 0 \end{matrix} \right]
\end{equation*}
Since $F_i+F_{i+1}=F_{i+2}$, the result follows.

We now apply this result to $i=n-1$.  What we have at that point is a matrix whose first $n-2$ rows are in echelon form with 1's on the diagonal, and whose last two rows are
\begin{equation*}
\left[ \begin{matrix} 0 & \hdots & 0 & F_n & F_{n-1} \\
0 & \hdots & 0 & -1 & 2 \end{matrix} \right]
\end{equation*}
As before, we multiply the last row by $-1$ and switch rows (which again leaves the determinant unchanged).  We get
\begin{equation*}
\left[ \begin{matrix} 0 & \hdots & 0 & 1 & -2 \\
0 & \hdots & 0 & F_n & F_{n-1} \end{matrix} \right]
\end{equation*}
We now add $-F_n$ times row $n-1$ to row $n$ to get
\begin{equation*}
\left[ \begin{matrix} 0 & \hdots & 0 & 1 & -2 \\
0 & \hdots & 0 & 0 & 2F_n+F_{n-1} \end{matrix} \right]
\end{equation*}
We have $2F_n+F_{n-1} = F_n+(F_n+F_{n-1}) = F_n + F_{n+1} =F_{n+2}$.  This results in an upper diagonal matrix.  We find the determinant by multiplying the diagonal entries to get $1 \cdots 1 \cdot F_{n+2} = F_{n+2}$.  This proves the lemma.
\end{proof}

This leads us to our first main theorem.

\begin{thm} \label{paththm}
Let $D$ be a strongly connected tournament with minimum feedback arc set $S$ such that $D_S$ is a directed path.  If $|S|=m$, then $D$ is $k$-AW if and only if $\gcd(k,F_{m+2})=1$.
\end{thm}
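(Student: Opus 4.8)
The plan is to set up the $k$-lights out game using the structure of the directed path $D_S$ and then reduce the winnability condition to the invertibility of the matrix $A_m$ from Lemma~\ref{fiblem}. First I would fix an ordering $\sigma = v_1, v_2, \ldots, v_n$ of $V(D)$ whose feedback arc set is the minimum set $S$. Since $D_S$ is a directed path with $m$ arcs, it has $m+1$ vertices forming a path; I would arrange the ordering so that the path structure is transparent. By Lemma~\ref{setupwin}(\ref{standardform}), any labeling can be toggled into a ``standard form'' where all non-head feedback vertices have label $0$. The head feedback vertices are precisely the heads of the arcs in $S$, and along the directed path these interleave with the tail feedback vertices. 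The idea is that once we are in standard form, the only freedom left in the toggling is the number of times each head feedback vertex is toggled (the non-head vertices have their toggle counts forced by Lemma~\ref{setupwin}(\ref{variables})), and these unknowns generate a system of linear equations.

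The key step is to identify the coefficient matrix of this system with $A_m$ (or a matrix with the same determinant). I would assign a variable to the number of toggles at each head feedback vertex, run Algorithm~\ref{duh} with these variables in place, and track how toggling propagates labels forward along $\sigma$. Because $D_S$ is a directed path, each head feedback vertex $w_i$ is dominated by exactly one tail feedback vertex (its predecessor on the path), and the reversed arcs in $S$ create a nearest-neighbor coupling among the variables. This is what produces the tridiagonal band with $1$'s on the diagonal, $1$'s on the superdiagonal, and $-1$'s on the subdiagonal, with the modified corner entries $A_1=[2]$, the $(1,1)$ entry, and the lower-right $2$ accounting for the two endpoints of the path (which behave differently since one endpoint is a pure tail and the other a pure head). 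I would justify these boundary entries by examining the first and last arcs of the path explicitly. Proposition~\ref{feedbackprop} guarantees $j - i \ge 2$ for each feedback arc $v_jv_i \in S$, which ensures no two feedback arcs interfere in a way that would destroy the tridiagonal pattern.

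Once the system is shown to have coefficient matrix $A_m$, the winnability criterion follows cleanly: by Proposition~\ref{Brownprop}(1), $D$ is $k$-AW if and only if the system is solvable for every right-hand side, which holds if and only if $\det(A_m)$ is a unit in $\mathbb{Z}_k$. An integer $r$ is a unit in $\mathbb{Z}_k$ precisely when $\gcd(r,k)=1$, so invertibility is equivalent to $\gcd(k, \det(A_m)) = 1$. By Lemma~\ref{fiblem}, $\det(A_m) = F_{m+2}$, giving the stated condition $\gcd(k, F_{m+2}) = 1$.

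I expect the main obstacle to be the bookkeeping in the second step: carefully translating the path structure and the forced toggle counts of Lemma~\ref{setupwin}(\ref{variables}) into the precise band matrix $A_m$, and in particular verifying that the two endpoints of the directed path really do yield the asymmetric corner entries (the plain $1$ in the top-left versus the $2$ in the bottom-right). Getting the sign conventions consistent between ``label increases'' and ``toggles needed to reach $0$'' is where a careful, vertex-by-vertex propagation argument will be essential. The small cases $m=1$ and $m=2$, corresponding to $A_1$ and $A_2$, should be checked directly to anchor the induction and confirm the boundary behavior.
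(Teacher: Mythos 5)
Your proposal follows essentially the same approach as the paper's proof: reduce to standard form via Lemma~\ref{setupwin}(\ref{standardform}), assign variables to the toggles of the head feedback vertices, use Lemma~\ref{setupwin}(\ref{variables}) to force all remaining toggle counts, and identify the resulting linear system with $A_m \textbf{x} = -\textbf{b}$, so that winnability reduces via Proposition~\ref{Brownprop} and Lemma~\ref{fiblem} to the condition $\gcd(k, F_{m+2}) = 1$. The vertex-by-vertex propagation bookkeeping you flag as the main obstacle is exactly the induction the paper carries out, including the asymmetric corner entries ($1$ at the top-left, $2$ at the bottom-right) arising from the two endpoints of the path.
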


\begin{proof}
Let $\sigma = v_1, v_2, \ldots , v_n$ be an ordering of $V(D)$ such that $S$ is the feedback arc set with respect to $\sigma$.  Since $D_S$ is a directed path with $m$ arcs, there are vertices $v_{i_1}, v_{i_2}, \ldots , v_{i_{m+1}}$ such that $S = \{ v_{i_{t+1}}v_{i_t} : 1 \le t \le m \}$.  By Lemma~\ref{feedbackprop}, $i_{t+1}-i_t \ge 2$ for each $t$, and so $v_{i_t+1}$ is not a head feedback vertex for any $1 \le t \le m$.  Also, since $D$ is strongly connected, $i_1=1$ and $i_{m+1}=n$.

We first assume that $D$ is $k$-AW.  Let $\lambda: V(D) \rightarrow \mathbb{Z}_k$ be a labeling.   By Lemma~\ref{setupwin}(\ref{standardform}), we can assume $\lambda(v)=0$ for all non-head feedback vertices $v$.  Since $\lambda$ is winnable, for each $1 \le t \le m$, we can let $x_t$ be the number of times we toggle $v_{i_t}$ as part of winning the game.  We apply these toggles in order as in Lemma~\ref{setupwin}(\ref{variables}).  The first vertex toggled is $v_{i_1}=v_1$, which we toggle $x_1$ times.  Since $v_1$ dominates every vertex except $v_{i_2}$, this increases the label of every vertex except $v_{i_2}$ by $x_1$ and leaves the label of $v_{i_2}$ unchanged.  By Lemma~\ref{setupwin}(\ref{variables}), since $v_2$ is not a head feedback vertex and now has label $x_1$, $v_2$ must be toggled $-x_1$ times to win the game.  This leaves all vertices $v$ with their original label $\lambda(v)$ except for $v=v_{i_2}$, which now has label $\lambda(v_{i_2})-x_1$.  Each $v_i$ with $2 < i < i_2$ now has label 0, so Lemma~\ref{setupwin}(\ref{variables}) implies they do not get toggled at all.  When $v_{i_2}$ gets toggled $x_2$ times, that leaves us with $v_1=v_{i_1}$ having label $\lambda(v_{i_1})+x_1+x_2$ and $v_{i_2}$ having label $\lambda(v_{i_2})-x_1+x_2$.  Note that when $v_{i_2}$ is toggled, every vertex $v_i$ with $i>i_2$ has label $\lambda(v_i)+x_2$, except $v_{i_3}$, whose label remains $\lambda(v_{i_3})$.

We apply induction to toggling the remaining vertices.  We have quite a few induction hypotheses.  First, after we toggle $v_{i_r}$ for $2 \le r \le m-1$, the labels of all vertices $v_i$ for $1 \le i \le i_r$ are as follows.
\begin{itemize}
\item If $v_i$ is a non-head feedback vertex, then the label is 0.
\item If $i=1$, then the label is $\lambda(v_{i_1})+x_1+x_2$.
\item If $2 \le t \le r-1$, then the label is $\lambda(v_{i_t})-x_{t-1}+x_t+x_{t+1}$.  Note:  This part of the induction hypothesis will be clear once we complete the inductive step.
\item $v_{i_r}$ has a label of $\lambda(v_{i_r})-x_{r-1}+x_r$.
\end{itemize}
Our induction hypothesis also assumes that the labels of all vertices $v_i$ with $i > i_r$ have label $\lambda(v_i)+x_r$, except $v_{i_{r+1}}$, whose label remains $\lambda(v_{i_{r+1}})$.

Now we are ready to continue toggling.  Our next toggle is $v_{i_r+1}$, which by induction has label $x_r$.  By Lemma~\ref{setupwin}(\ref{variables}), $v_{i_r+1}$ must be toggled $-x_r$ times.  As in the base case, all vertices $v_i$ with $i>i_r$ are returned to their original labels except for $v_{i_{r+1}}$, which now has label $\lambda(v_{i_{k+1}})-x_k$.  As in the base case, all non-head feedback vertices between $v_{i_r}$ and $v_{i_{r+1}}$ have label 0, so they are not toggled at all.  Then $v_{i_{r+1}}$ is toggled $x_{r+1}$ times.  This gives $v_{i_r}$ a label of $\lambda(v_{i_r})-x_{r-1}+x_{i_r}+x_{i_{r+1}}$ and $v_{i_{r+1}}$ a label of $\lambda(v_{i_{r+1}})-x_{i_r}+x_{r+1}$.  Furthermore, this increases the labels of all vertices $v_i$ with $i>i_{r+1}$ by $x_{r+1}$, so they have label $\lambda(v_i)+x_{r+1}$.  Since $v_{i_{r+2}}v_{i_{r+1}} \in A(D)$, $v_{i_{r+2}}$ still has label $\lambda(v_{i_{r+2}})$.  This completes the induction argument.

Once we have toggled $v_m$, we apply this process one more time to finish off the game.  Since $v_{i_m}$ was toggled $x_m$ times, that gives $v_{i_m+1}$ a label of $x_m$.  As before, $v_{i_m+1}$ is toggled $-x_m$ times.  This leaves all non-head feedback vertices with a label of 0, and leaves $v_{i_{m+1}}=v_n$ (which is not a head feedback vertex) with a label of $-x_m$.  In order to end up with a label of 0, $v_{i_{m+1}}$ is toggled $x_m$ times, leaving $v_{i_m}$ with a label of $\lambda(v_{i_m})+x_{m-1}+2x_m$.

At this point, all labels must be 0, and so we must have
\begin{itemize}
\item $\lambda(v_{i_1})+x_1+x_2=0$
\item $\lambda(v_{i_t})-x_{t-1}+x_t+x_{t+1}=0$ for $1 \le t \le m-1$
\item $\lambda(v_{i_m})+x_{m-1}+2x_m=0$
\end{itemize}
Notice that if we write this system of linear equations in matrix form, we get $A_m \textbf{x} = -\textbf{b}$, where $\textbf{x}[t]=x_t$, $\textbf{b}[t]=\lambda(v_{i_t})$, and $A_m$ is the matrix from Lemma~\ref{fiblem}.  So this matrix equation must have a solution for all $\textbf{b}$.  Conversely, any solution to the matrix equation gives us a winning toggling for the game.  Thus, $D$ is $k$-AW if and only if the matrix equation $A_m \textbf{x} = -\textbf{b}$ has a solution for every $\textbf{b}$.  This occurs precisely when $A_m$ is invertible, which by Proposition~\ref{Brownprop} occurs precisely when $\det(A_m)$ is a unit in $\mathbb{Z}_k$.  Since $\det(A_m)=F_{m+2}$ by Lemma~\ref{fiblem}, this occurs precisely when $\gcd(k,F_{m+2})=1$.
\end{proof}

We now consider feedback arc sets $S$ where $D_S$ is a directed star.  Recall that the matrices in Lemma~\ref{fiblem} were helpful in the proof of Theorem~\ref{paththm}.  The following matrices play a similar role when $D_S$ is a directed star.  

\begin{lem} \label{lowertriangular}
For each $n \ge 1$, define $L_n$ to be the lower triangular matrix where each entry at or below the main diagonal is 1.  Then $L_n$ can be row-reduced to $I_n$ by, starting from the bottom row and working upwards, subtracting each row from the row below it.
\end{lem}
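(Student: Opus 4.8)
The plan is to verify directly that the described sequence of elementary row operations transforms $L_n$ into the identity. Let me index the rows $R_1, R_2, \ldots, R_n$ from top to bottom, so that $R_p$ initially has a $1$ in columns $1$ through $p$ and a $0$ in columns $p+1$ through $n$. The prescribed operation processes the rows in the order $R_n, R_{n-1}, \ldots, R_2$: for each $p$ from $n$ down to $2$, we replace $R_p$ by $R_p - R_{p-1}$.

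First I would confirm what a single such operation does in isolation. When we subtract $R_{p-1}$ from $R_p$, the original $R_p = [\,1\;\cdots\;1\;0\;\cdots\;0\,]$ (ones in columns $1,\ldots,p$) meets $R_{p-1} = [\,1\;\cdots\;1\;0\;\cdots\;0\,]$ (ones in columns $1,\ldots,p-1$). The difference cancels columns $1$ through $p-1$ and leaves a single $1$ in column $p$, giving the standard basis row $e_p$. The key structural point is that we work from the bottom up: when we operate on $R_p$, the row $R_{p-1}$ above it has \emph{not yet been modified} (modifications so far have only touched rows $R_{p}, R_{p+1}, \ldots, R_n$, all strictly below $R_{p-1}$). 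Hence $R_{p-1}$ still equals its original value $[\,1\;\cdots\;1\;0\;\cdots\;0\,]$ at the moment we use it, which is exactly what makes the cancellation produce $e_p$. This ordering observation is the crux of the argument and the one place where the "bottom up" instruction is essential — doing the operations top-down would destroy the rows needed later.

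I would then organize this as a straightforward induction (or simply a direct description), checking that after processing $R_p$ the row becomes $e_p$ for each $p \ge 2$, while $R_1$ is never altered and already equals $e_1 = [\,1\;0\;\cdots\;0\,]$. Since each resulting row $R_p$ equals the $p$-th standard basis row, the final matrix is $I_n$. I do not expect any genuine obstacle here; the only thing to be careful about is stating precisely that the row being subtracted is in its original (unmodified) form, which follows from the bottom-up order. For completeness I might note, via Proposition~\ref{Brownprop}, that these operations preserve the determinant, so $\det(L_n) = \det(I_n) = 1$, although this is a consequence rather than something needed for the stated claim.
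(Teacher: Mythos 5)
Your proposal is correct and follows essentially the same route as the paper: a direct verification that subtracting each (still-unmodified) row from the row below it turns row $p$ into the standard basis row $e_p$, yielding $I_n$. Your explicit observation that the bottom-up order guarantees $R_{p-1}$ is unmodified when it is used is exactly the point the paper's terser proof leaves implicit, so your write-up is, if anything, slightly more careful.
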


\begin{proof}
Each pair of consecutive rows (say, the $i^{\hbox{th}}$ row and the $(i+1)^{\hbox{th}}$ row) looks like the following.
\begin{equation*}
\left[ \begin{matrix} 1 & \cdots & 1 & 0 & 0 & \cdots & 0 \\
1 & \cdots & 1 & 1 & 0 & \cdots & 0 \end{matrix} \right]
\end{equation*}
Subtracting the top row from the bottom row results in the bottom row having a 1 in the $(i+1)^{\hbox{th}}$ column and zeros elsewhere.  Repeating this process on $L_n$ clearly results in $I_n$.
\end{proof}

Here is some more helpful terminology for when $D_S$ is a directed star.

\begin{Def}
Let $D$ be a digraph, let $\sigma = v_1, v_2, \ldots , v_n$ be an ordering of $V(D)$, and let $S$ be the feedback arc set with respect to $\sigma$.  A \emph{head feedback interval} (resp. tail feedback interval) is a nonempty subset of $V(D)$ of the form $\{ v_i, v_{i+1}, \ldots , v_{j-1}, v_j \}$, where each $v_t$ with $i \le t \le j$ is a head feedback vertex (resp. tail feedback vertex), and both $v_{i-1}$ and $v_{j+1}$ are not head feedback vertices (resp. tail feedback vertices).  A \emph{feedback interval} is a set of vertices that is either a head feedback interval or a tail feedback interval (or both).
\end{Def}

Note that if $D_S$ is a directed star, the feedback intervals $F_1, F_2, \ldots , F_t$ can be ordered so that if $i<j$, then all vertices in $F_i$ come before all vertices in $F_j$.  Also, the vertices in the head feedback intervals come before the vertices in the tail feedback vertices, and the set containing the central vertex is both a head feedback interval and a tail feedback interval.  Thus, we can order the feedback intervals as above, but we can now write them as $H_1, H_2, \ldots , H_r, \{ v \} , T_1, T_2, \ldots , T_s$, where the $H_i$ are the head feedback intervals, the $T_j$ are the tail feedback intervals, and $v$ is the central vertex.  In this case, we have $T_j \rightarrow v \rightarrow H_i$.

We are now ready to prove our result for when $D_S$ is a directed star.

\begin{thm}
Let $D$ be a strongly connected digraph, let $\sigma = v_1, v_2, \ldots , v_n$ be an ordering of the vertices, and let $S$ be a minimum feedback arc set with respect to $\sigma$ such that $D_S$ is a directed star.  Let $m$ be the number of feedback intervals of $S$.  Then $D$ is $k$-AW if and only if $\gcd(k,m)=1$.
\end{thm}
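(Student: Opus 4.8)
The plan is to follow the template established in the proof of Theorem~\ref{paththm}: put the labeling into a convenient standard form, extract from a hypothetical winning toggling a square linear system whose matrix $M$ depends only on the feedback structure, and then show that $\det(M) = \pm m$ over $\mathbb{Z}$. Proposition~\ref{Brownprop} then converts invertibility of $M$ over $\mathbb{Z}_k$ into the numerical condition $\gcd(k,m)=1$, and since $D$ is $k$-AW exactly when the system is solvable for every right-hand side, the theorem follows.

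First I would apply Lemma~\ref{setupwin}(\ref{standardform}) to assume that every non-head feedback vertex already has label $0$. Writing the feedback intervals in the order $H_1,\ldots,H_r,\{v\},T_1,\ldots,T_s$ supplied by the discussion preceding the theorem, so that $m=r+s+1$, the head feedback vertices are precisely $v$ together with the vertices of the $H_i$. These are the vertices whose toggle counts I would treat as free variables; by Lemma~\ref{setupwin}(\ref{variables}), once they are fixed the number of toggles of every non-head feedback vertex (the regular vertices and the tail feedback vertices of the $T_j$) is forced so as to return those vertices to $0$. Winning is then equivalent to requiring that each head feedback vertex also end at label $0$, which produces exactly one equation per head feedback vertex and hence a square system $M\mathbf{x}=-\mathbf{b}$ whose variables are the head feedback toggle counts.

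Next I would compute the entries of $M$ by tracing Algorithm~\ref{duh}, exploiting three structural features of a tournament whose minimum feedback arc set arc-induces a directed star: (i) the central vertex $v$ dominates every vertex of every $H_i$, so $v$'s toggle count enters every head equation; (ii) every vertex of every $T_j$ dominates $v$, so the forced toggles along the tail intervals feed back into $v$'s equation; and (iii) since the only feedback arcs are the star arcs, any two vertices of a single feedback interval are joined by a forward arc, making each interval a transitive subtournament in which a vertex is dominated by all earlier vertices of that interval. Feature (iii) is exactly what makes the lower-triangular matrix $L_n$ of Lemma~\ref{lowertriangular} appear: the block of $M$ coming from a single head interval has the all-ones lower-triangular pattern, and the forced toggles propagating along a tail interval contribute in the analogous way. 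All coupling between the blocks passes through $v$ via (i) and (ii), and strong connectivity (together with Proposition~\ref{feedbackprop}, which forces the gaps making the intervals genuinely separate and fixes the first and last vertices) pins down the boundary rows, just as $i_1=1$ and $i_{m+1}=n$ did in Theorem~\ref{paththm}.

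Finally I would evaluate $\det(M)$. Using Lemma~\ref{lowertriangular} to row-reduce each interval block to the identity by the determinant-preserving row additions of Proposition~\ref{Brownprop}, the interval sizes drop out entirely (each block contributes a factor $\pm 1$), and the computation collapses to that of a small cyclic matrix carrying a diagonal of $1$'s, a subdiagonal of $-1$'s, and a single coupling row through $v$; expanding this matrix yields $\det(M)=\pm m$. The main obstacle I anticipate is precisely the bookkeeping in the middle steps: tracking how the forced toggles of the regular vertices and of the tail intervals re-enter the head equations through $v$, and organizing $M$ so that the $L_n$-reductions cleanly separate the within-interval contributions (which collapse with determinant $\pm 1$, so that neither the interval sizes nor the regular vertices affect the answer) from the between-interval coupling (which alone carries the factor $m$). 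Getting the signs and the two boundary rows exactly right, so that the determinant comes out to be $\pm m$ rather than some other integer, is the delicate part of the argument.
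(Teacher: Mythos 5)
Your proposal follows essentially the same route as the paper's proof: normalize via Lemma~\ref{setupwin}(\ref{standardform}), take the head feedback vertices' toggle counts (the $H_i$'s together with $v$) as the variables, let Lemma~\ref{setupwin}(\ref{variables}) force all other toggles, and obtain a square system whose matrix has the $L_{|H_i|}$ blocks of Lemma~\ref{lowertriangular} coupled only through $v$, with determinant $\pm m$ so that Proposition~\ref{Brownprop} gives the $\gcd(k,m)=1$ criterion. The remaining work you flag (the bookkeeping showing each tail interval contributes exactly one $+x_v$ to $v$'s equation and the corner entry becomes $r+s+1$) is precisely what the paper's induction carries out, so the plan is sound and matches the published argument.
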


\begin{proof}
Since $D_S$ is a directed star, we can partition $V(D_S)$ into the sets $H_1, \ldots , H_r$, $\{ v \}, T_1, \ldots , T_s$ as in the paragraph preceding the theorem statement.  Note that these are all the feedback intervals (where $\{v\}$ is both a head feedback interval and a tail feedback interval), so $m=r+s+1$.  Since $D$ is strongly connected, $v_1 \in H_1$ and $v_n \in T_s$.  We proceed similarly as in Theorem~\ref{paththm}.  We first assume that $D$ is $k$-AW.  Let $\lambda: V(D) \rightarrow \mathbb{Z}_k$ be a labeling.   By Lemma~\ref{setupwin}(\ref{standardform}), we can assume $\lambda$ is zero on all non-head feedback vertices.  Since $\lambda$ is winnable, for each head feedback vertex $u$, we can let $x_u$ be the number of times we toggle $u$ in a winning toggling.  For each $H_i$, let $y_i = \sum_{u \in H_i} x_u$. %and for each $T_j$, let $z_j = \sum_{u \in T_j} x_u$.

We apply these toggles in order as in Lemma~\ref{setupwin}(\ref{variables}).  Since $v_1 \in H_1$, we begin with the vertices in $H_1$.  Since the vertices in $H_1$ dominate every vertex after $H_1$ besides $v$, the vertex right after the last vertex in $H_1$ has label $y_1$, and so Lemma~\ref{setupwin}(\ref{variables}) implies we must toggle that vertex $-y_1$ times.  This results in the following.
\begin{itemize}
\item The vertex $v$ dominates each vertex in $H_1$, but not the vertex after the last vertex in $H_1$.  Thus, $v$ has label $-y_1$.
\item Each vertex $u$ after the vertices in $H_1$ that is not $v$ has its label increased by $y_1$ by toggling the vertices in $H_1$ and then reduced by $y_1$ by the vertex following the vertices in $H_1$.  Thus, the label of $u$ is $\lambda(u)$.
\item Each $w \in H_1$ is dominated by all vertices in $H_1$ that come before it.  Thus, each $w \in H_1$ has label $\displaystyle \sum_{u \in H_1, u \le w} x_u$, where $u \le w$ means that $u$ comes at or before $w$ in the ordering $\sigma$.
\end{itemize}
We apply induction to toggling all vertices before $v$.  Once we toggle the vertices before $H_t$ for $t \ge 2$, by induction the labels of all vertices are as follows.
\begin{itemize}
\item $v$ has label $\sum_{i=1}^{t-1}-y_i$.
\item Each vertex $u$ after the vertices in $H_{t-1}$ besides $v$ has label $\lambda(u)$.  In particular, the vertices between $H_{t-1}$ and $H_t$ have label 0.
\item For each $i <t$, each vertex $w$ in $H_i$ has label $\displaystyle \sum_{u \in H_i, u \le w} x_u$.
\end{itemize}
After we toggle the vertices of $H_t$, the vertex right after the vertices in $H_t$ has label $y_t$.  By Lemma~\ref{setupwin}(\ref{variables}), we toggle this vertex $-y_t$ times.  We can now argue as in the $t=1$ case that $v$ has label $\sum_{i=1}^{t}-y_i$; each vertex $u$ after the vertices in $H_t$ besides $v$ has label $\lambda(u)$; and each vertex $w$ in $H_t$ has label $\displaystyle \sum_{u \in H_t, u \le w} x_u$.  This completes the induction argument.  Applying this to $t=r$, we get
\begin{itemize}
\item The label of $v$ is $\sum_{i=1}^r-y_i = \sum_{u \ne v} -x_u$.
\item Every non-head feedback vertex $u$ has label $\lambda(u)=0$.
\item Every vertex $w \in H_i$ for all $1 \le i \le r$ has label $\displaystyle \sum_{u \in H_i, u \le w} x_u$.
\end{itemize}
We now finish the game by toggling $v$ $x_v$ times and following it to its conclusion.  Since $v$ dominates each head feedback vertex, each vertex $w \in H_i$ now has label $x_v + \sum_{u \in H_i, u \le w} x_u$ for all $1 \le i \le r$.  The rest of the argument follows very much as it did for the $H_i$.  Using Lemma~\ref{setupwin}(\ref{variables}), we use a similar induction argument as above to prove that each vertex that follows the last vertex in each $T_j$ (including $\{ v \}$, which is also a tail feedback interval) has a label of $x_v$ right before it is toggled, and therefore must be toggled $-x_v$ times.  This returns labels of all vertices except the tail feedback vertices (which get a label of $-x_v$) to 0.  Then the first vertex of each $T_j$ must be toggled $x_v$ times to reset all tail feedback vertices to 0.  But since each tail feedback vertex dominates $v$, it adds $x_v$ to the label of $v$.  Thus, at the end of the toggling of all vertices, the label of $v$ is increased $x_v$ for toggling $v$ and is increased by $x_v$ for toggling the vertices in each $T_i$.  Thus, $v$ has label $(s+1)x_v + \sum_{u \ne v} -x_u$.

At this point, we have won the game, so each vertex has label 0.  As in Theorem~\ref{paththm}, this leads to a system of linear equations $A \textbf{x} = -\textbf{b}$, where
\begin{equation*}
A = \left[ \begin{matrix}
L_{|H_1|} & 0 & 0 & 0 & 1 \\
0 & L_{|H_2|} & 0 & 0 & 1 \\
\vdots & \vdots & \ddots & \vdots & \vdots \\
0 & 0 & \hdots & L_{|H_r|} & 1 \\
-1 & -1 & \hdots & -1 & s+1
\end{matrix} \right]
\end{equation*}
and $\textbf{b}[i]=\lambda(v_i)$.  Note that each $L_{|H_i|}$ comes from the labels of all the $w \in H_i$ (i.e. $x_v + \sum_{u \in H_i, u \le w} x_u$), specifically the $\sum_{u \in H_i, u \le w} x_u$ part of the label.  Each of the subcolumns of 1's in the last column comes from the $x_v$ part of the label of $w$.

Conversely, any solution to $A \textbf{x} = -\textbf{b}$ will give us a winning toggling for the game, so $D$ is $k$-AW if and only if $A$ is invertible.  By Proposition~\ref{Brownprop}, this occurs precisely when $\det(A)$ is a unit in $\mathbb{Z}_k$.  We compute the determinant by row-reducing $A$ to an echelon form.  By Lemma~\ref{lowertriangular}, we can row-reduce each $L_{|H_i||}$ by subtracting each row from the one below it, starting with the bottom two rows.  If we extend this to the rows of $A$, every 1 in the last column gets canceled out except the one at the top of each block of 1's across from $L_{|H_i|}$.  This also does not affect the determinant by Proposition~\ref{Brownprop}.  Thus, we are left with the following matrix.
\begin{equation*}
\left[ \begin{matrix}
I_{|H_1|} & 0 & 0 & 0 & \textbf{e}_{|H_1|} \\
0 & I_{|H_2|} & 0 & 0 & \textbf{e}_{|H_2|} \\
\vdots & \vdots & \ddots & \vdots & \vdots \\
0 & 0 & \hdots & I_{|H_r|} & \textbf{e}_{|H_r|} \\
-1 & -1 & \hdots & -1 & s+1
\end{matrix} \right]
\end{equation*}
where each $\textbf{e}_{|H_i|}$ is a column vector whose first entry is 1, and whose other entries are 0.  We complete the row reduction by adding every row above the last row to the last row, which again does not affect the determinant.  This cancels out  all the $-1$ entries.  Also, each $\textbf{e}_{|H_i|}$ adds 1 to the last entry in the last row.  Since there are $r$ such $\textbf{e}_{|H_i|}$'s, this results in
\begin{equation*}
\left[ \begin{matrix}
I_{|H_1|} & 0 & 0 & 0 & \textbf{e}_{|H_1|} \\
0 & I_{|H_2|} & 0 & 0 & \textbf{e}_{|H_2|} \\
\vdots & \vdots & \ddots & \vdots & \vdots \\
0 & 0 & \hdots & I_{|H_r|} & \textbf{e}_{|H_r|} \\
0 & 0 & \hdots & 0 & r+s+1
\end{matrix} \right]
\end{equation*}
Since this is an upper diagonal matrix, we multiply the entries on the diagonal to get the determinant, which is $r+s+1=m$.  This is a unit precisely when $\gcd(k,m)=1$.  This completes the proof.
\end{proof}

\section{Conclusion}

We would like to classify winnability in all digraphs.  However, here are some questions that are perhaps more manageable.
\begin{itemize}
\item Theorem~\ref{strongreduce} reduces the question of being $k$-AW to strongly connected graphs.  However, little about connectivity was used in the results in Section~\ref{strong}.  Perhaps stronger results can be obtained with more sophisticated use of strong connectivity.
\item Feedback arc sets were helpful in proving our results on tournaments.  However, while our assumptions about the feedback arc sets being minimum seems a reasonable assumption, the only significant consequence of this that we used was that consecutive vertices could not be incident with the same feedback arc.  Perhaps there are more significant ways the assumption of a minimum feedback arc set can be used to obtain stronger results.
\item Finally, it appears that for a feedback arc set $S$, the isomorphism class of $D_S$ is significant (e.g. the result for directed paths was very different from the result for directed stars).  However, even within isomorphism classes, there can be other issues that affect winnability (e.g. in directed stars, whether and how the head feedback vertices and tail feedback vertices are organized into feedback intervals).  What other quirks in the ordering of vertices can affect winnability?
\end{itemize}

\bibliographystyle{amsalpha}
\bibliography{lightsout}

\end{document}